\newif\ifuseunicodemath
\newtheorem{theorem}{Theorem}
\newtheorem{corollary}[theorem]{Corollary}
\newtheorem{lemma}[theorem]{Lemma}
\theoremstyle{definition}
\newtheorem{definition}[theorem]{Definition}
\theoremstyle{remark}
\newtheorem{remark}[theorem]{Remark}
\newcommand{\Lfavg}{\overline{L}_f}
\newcommand{\Amax}{A_{\mathrm{max}}}
\newcommand{\AM}{A_{\mathrm{M}}}
\newcommand{\Lfmax}{\widehat L_f}
\DeclareMathOperator{\diam}{diam}
\DeclareMathOperator{\dist}{d}
\newcommand{\bbT}{\mathbb T}
\newcommand{\perturbed}{\mathcal G}
\newcommand{\rectified}{G}
\newcounter{step}
\newcommand{\step}[1]{\refstepcounter{step}\par\textbf{Step \thestep.} \emph{#1}.}
\author{Yury G. Kudryashov\footnote{Research carried out within “The National Research University Higher School of Economics' Academic Fund Program” in 2012–2013, research grant No. 11-01-0236.}\footnote{Address: National Research University Higher School of Economics, Russia, Moscow, Myasnitskaya str., 20}}
\title{Bony attractors in higher dimension}
\begin{document}
\maketitle
\tableofcontents
\section{Introduction}
This article is devoted to study of possible geometric structures of attractors of typical dynamical systems.

In the most simple cases, an attractor of a dynamical system is a union of finite set of smooth manifolds. There are widely known examples of systems whose attractors are locally homeomorphic to a Cartesian product of a Cantor set by a manifold (say, Smale–Williams solenoid map) or a “Cantor book” (Lorenz attractor).

Recently, there emerged a few examples of locally typical dynamical systems having more complicated attractors. In particular, I \cite{bony-step} introduced the notion of a bony attractor, and Díaz with various co-authors \cite{DiazGelfert,DiazGelfertRams,DiazHorita} introduced a similar notion of a porcupine horseshoe.

In \cite{bony-step}, bony attractors were found in a rather artificial space of \emph{step skew products} over a Bernoulli shift. Later in PhD thesis \cite{YK-these} these results were extended to the set of diffeomorphisms of the three-dimensional torus. In this article, we extend the results to the set of diffeomorphisms of $\bbT^2\times S^d$ for any sphere $S^d$. We also simplify the proofs and fix some minor problems found in the proofs.

In the next Section, we shall introduce the required notions and notation, and formulate the main theorem. After that we shall prove the assertions of main theorem one by one.

\section{Required notions, notation and main theorem}
\subsection{Maximal attractor and the likely limit set}
First, recall two formalizations of the notion of attractor, the \emph{likely limit set} and the \emph{maximal attractor}.
\begin{definition}
	Let $F:X\to X$ be a continuous dynamical system. The \emph{$\omega $-limit set} $\omega (x)$ of a point $x\in X$ is the set of limit points of the positive semi-orbit $F^n(x)$.
\end{definition}
\begin{definition}
	[Milnor, \cite{Milnor2}]
	The \emph{likely limit set} of a dynamical system $F:X\to X$ on a topological space equipped with a measure is the minimal closed set $\AM(F)$ such that $F^n(x)\to \AM(F)$ as $n\to \infty $ for almost every point $x\in X$. Equivalently, $\AM(F)$ is the minimal closed set such that $\omega (x)\subset \AM(F)$ for almost every point $x\in X$.
\end{definition}
\begin{definition}
	Given a dynamical system $F:X\to X$, $F(X)\Subset X$, the \emph{maximal attractor} of $F$ is the intersection
	$$
		\Amax(F)=\bigcap _{n\geq 0} F^n(X).
	$$
\end{definition}
It is easy to show that $\AM(F)\subset \Amax(F)$. Indeed, $\Amax(F)$ includes the $\omega $-limit sets of \emph{all} points $x\in X$.

\subsection{Main Theorem}
Now we are ready to formulate the main theorem.
\begin{theorem}
	[Main Theorem]
	\label{thm:main}
	There exists a non-empty open subset $\mathscr U$ of the set of $C^2$ diffeomorphisms on $X=\bbT^2\times S^d$ such that for every diffeomorphism $\perturbed\in \mathscr U$ the following holds.
	\begin{enumerate}
		\item There exists a $\perturbed$-invariant topological fibration $\mathscr F^c$ of $X$ such that each fiber is diffeomorphic to $S^d$.
		\item For an uncountable set of fibers $\mathscr F^c(x)$ of $\mathscr F^c$, the likely limit set $\AM$ intersects $\mathscr F^c(x)$ on a set with non-empty interior. For such fibers, we shall say that the closure of the interior of $\AM\cap \mathscr F^c(x)$ is a \emph{bone}.
		\item The union of fibers that contain bones is dense in $X$.
		\item For an uncountable set of fibers $\mathscr F^c(x)$ of $\mathscr F^c$, the likely limit set $\AM$ intersects $\mathscr F^c(x)$ on a single point. We shall say that the union of these points is the \emph{graph part} of the attractor.
		\item The graph part of attractor is dense in $\AM$.
		\item $\mu \AM=0$; moreover, the Hausdorff dimension of $\AM$ is less than $d+2$.
		\item There exists a disk $D\subset S^d$ such that $\perturbed(\bbT^2\times D)\Subset \bbT^2\times D$ and the maximal attractor $\Amax=\bigcap _{i\geq 0} \perturbed^i(\bbT^2\times D)$ of the restriction of $\perturbed$ to $\bbT^2\times D$ coincides with $\AM$.
	\end{enumerate}
\end{theorem}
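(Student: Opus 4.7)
The plan is to construct $\mathscr U$ as a $C^2$-neighborhood of a carefully designed model skew product
$\perturbed_0(x,y)=(A(x),f_x(y))$, where $A\colon\bbT^2\to\bbT^2$ is a linear Anosov map (e.g.\ a cat map) and $\{f_x\}_{x\in\bbT^2}$ is a family of diffeomorphisms of $S^d$ chosen so that the average logarithmic contraction $\Lfavg$ is strongly negative while the pointwise worst expansion $\Lfmax$ is large only on a small subset of $\bbT^2$. This is the higher-dimensional analogue of the step-skew-product construction in \cite{bony-step,YK-these}, the novelty being that the fiber is now $S^d$ instead of an interval and the base is a smooth Anosov diffeomorphism. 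Because $A$ is uniformly hyperbolic and the fiber maps are strong contractions, $\perturbed_0$ is partially hyperbolic with center bundle tangent to the $\{x\}\times S^d$ fibers.

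Step~1 would establish item~(1): by Hirsch–Pugh–Shub normal hyperbolicity, for every $\perturbed$ sufficiently $C^1$-close to $\perturbed_0$ there exists a unique $\perturbed$-invariant topological fibration $\mathscr F^c$ whose leaves are $C^2$ spheres close to the trivial ones; this immediately carves out an open set $\mathscr U$. Step~2 (items (2)–(5)) is the transfer of the bony/graph dichotomy from the symbolic to the smooth setting. Fix a Markov partition of $A$; the forward itinerary of $x\in\bbT^2$ is a symbolic sequence, and the fiber dynamics along it is a composition of the fiber diffeomorphisms read off that sequence. Shadowing the step-skew-product arguments of \cite{bony-step}, I would show: (i) on an uncountable set of itineraries where the time averages of the logarithm of the local contraction stay above a threshold $\lambda>0$, the graph transform inside the fiber admits an attracting invariant open set—this produces the bones; (ii) on the full-measure set of itineraries where the Birkhoff averages equal $\Lfavg<0$, the fiber attractor degenerates to a single point—this produces the graph part. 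The base Anosov has dense periodic orbits of both \emph{bone} and \emph{graph} type, which combined with the local product structure of Markov rectangles yields densities in items~(3) and~(5). Uncountability in (2) and (4) follows from uncountability of the corresponding symbolic sets, preserved under the Markov semiconjugacy since the itinerary map is at most finite-to-one over a residual set.

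Step~3 is the Hausdorff dimension bound in item~(6). The idea is to cover $\AM$ by forward images of a thin tube around a finite set of ``reference'' fibers and to estimate the number of covering balls of radius $\varepsilon$ using the cocycle of fiber contractions. Because $\AM$ pulls back to the $\perturbed_0$-attractor under $C^2$-small perturbations with uniformly negative average fiber Lyapunov exponent, a Ledrappier–Young / Kaplan–Yorke–type argument bounds the fiber contribution to the dimension strictly below $d$; adding the topological dimension $2$ of the base gives $\dim_H\AM<d+2$, and in particular $\meas\AM=0$. The small measure statement also falls out of the Fubini decomposition along $\mathscr F^c$, since for $\mu$-almost every $x$ the intersection $\AM\cap\mathscr F^c(x)$ is at most a point.

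Step~4, item~(7): choose a round disk $D\subset S^d$ whose fiber preimages under $\perturbed_0$ (hence $\perturbed$, by closeness) map strictly inside $\bbT^2\times D$; uniform contraction of the fiber maps on the boundary guarantees $\perturbed(\bbT^2\times D)\Subset \bbT^2\times D$. Then $\Amax=\bigcap_i\perturbed^i(\bbT^2\times D)$ is well-defined, contains $\AM$, and fibers over $\bbT^2$ with compact slices. To prove $\Amax\subset\AM$, use that on the full-measure set of ``graph-type'' itineraries the forward orbit inside the fiber converges to a single point in the graph part; since the graph part is dense in $\AM$ by~(5), every $\omega$-limit point of every point of $\bbT^2\times D$ lies in $\overline{\text{graph part}}=\AM$, and hence $\bigcap_i\perturbed^i(\bbT^2\times D)\subset\AM$. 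I expect the main obstacle to be Step~2: transporting the delicate bone/graph analysis from step skew products to genuine $C^2$ diffeomorphisms while preserving both uncountability and the density statements, since the Markov coding is only Hölder and one must ensure that the symbolic constructions localize to actual fibers rather than to typical-itinerary sets that could project to measure-zero subsets of $\bbT^2$.
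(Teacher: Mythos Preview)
Your overall architecture matches the paper's (build a model skew product, use normal hyperbolicity/Ilyashenko--Negut to rectify perturbations, analyze the rectified skew product), but two steps have genuine gaps.

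\textbf{Item (5), density of the graph part in $\AM$.} You reduce this to ``dense periodic orbits of graph type plus local product structure,'' but that only gives density of graph-type \emph{fibers} in the base. It does not show that the single graph point on such a fiber is close to an arbitrary point of $\AM$; a priori the graph point could sit far from a nearby bone. The paper's proof is much more delicate: given $(b_0,m_0)\in\Amax$, it works on a short arc of the \emph{strong stable} leaf of $\perturbed$ through a nearby point, pulls it back until its base projection crosses a full pre-Markov rectangle, and then uses the simplex geometry of the contracting fiber maps $f_0,\dots,f_d$ to inductively select nested sub-arcs whose preimages stay in a controlled region. The intersection of these arcs is a graph point arbitrarily close to $(b_0,m_0)$. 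Nothing like this follows from density of periodic orbits alone.

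\textbf{Item (7), $\Amax=\AM$.} Your argument is a non-sequitur: from ``for a.e.\ $x$ the orbit converges to a single graph point'' you conclude ``every $\omega$-limit point of \emph{every} $x\in\bbT^2\times D$ lies in $\AM$.'' The full-measure statement is exactly the definition of $\AM$ and says nothing about the remaining orbits, which are precisely the ones you need to control to get $\Amax\subset\AM$. The paper goes the other way: it shows $\AM$ meets \emph{every} central fiber. If not, a positive-measure forward-invariant closed set $V$ of points whose orbits avoid a neighborhood $p^{-1}(U')$ exists; a Lebesgue-density/distortion argument on one-dimensional unstable curves then forces $V$ to contain an unstable arc, whose base projection under $A^n$ would forever miss $U'$, contradicting transitivity of $A$. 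Once $\AM$ meets every fiber, on graph-type fibers it must hit the unique point of $\Amax$ there, so $\AM\supset\text{graph part}$; combined with (5) and closedness of $\AM$ this gives $\AM\supset\Amax$.

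A smaller point: your mechanism for bones (``time averages of $\log$ contraction above a threshold'' producing an ``attracting invariant open set'') is not how the paper does it. The bones come from a fiber map $f_{d+1}$ with a \emph{repelling} fixed point, so over periodic base points whose orbit stays in the corresponding Markov rectangles there is a domain $U$ with $U\Subset g_{b,q}(U)$; this expanding-inverse picture, not an attracting graph transform, is what forces $M_b\supset U$.
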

\subsection{(Partially) hyperbolic maps}
Next, we shall need some notions from partial hyperbolic theory. For a more detailed introduction, see \cite{Pesin}.
\begin{definition}
	\label{def:hyperbolic-broad}
	A smooth dynamical system $F:X\to X$ on a manifold is called \emph{partially hyperbolic in a broad sense} if there exist $\mu <\lambda $, $c>0$ and two invariant distributions $E^s_x\subset T_xX$ and $E^u_x\subset T_xX$, $dF_x(E^{s,u}_x)=E^{s,u}_{F(x)}$, such that $T_xX=E^s_x\oplus E^u_x$ and
	$$
		\| dF^n_x|_{E^s_x}\| \leq c\mu ^n,\quad \| dF^{-n}|_{E^u_x}\| \leq c\lambda ^{-n}.
	$$
\end{definition}
\begin{definition}
	A smooth dynamical system $F:X\to X$ is called \emph{partially hyperbolic in a strict sense} if there exists $C>0$,
	$$
		0<\lambda _1\leq \mu _1<\lambda _2\leq \mu _2<\lambda _3\leq \mu _3,\quad \mu _1<1, \lambda _3>1
	$$
	and an invariant decomposition
	$$
		T_xX=E^s_x\oplus E^c_x\oplus E^u_x, \quad dF(E^{s,c,u}_x)=E^{s,c,u}_{F(x)}
	$$
	such that for $n>0$ we have
	\begin{gather*}
		C^{-1}\lambda _1^n\| v\| \leq \| dF^nv\| \leq C\mu _1^n\| v\| ,\quad v\in E^s(x),\\
		C^{-1}\lambda _2^n\| v\| \leq \| dF^nv\| \leq C\mu _2^n\| v\| ,\quad v\in E^c(x),\\
		C^{-1}\lambda _3^n\| v\| \leq \| dF^nv\| \leq C\mu _3^n\| v\| ,\quad v\in E^u(x).
	\end{gather*}
\end{definition}
The distributions $E^s$ and $E^u$ are always integrable. The corresponding foliations are called \emph{stable} and \emph{unstable}. If $E^c$ is integrable as well, then the corresponding foliation is called \emph{central}.
\subsection{Ilyashenko–Gorodetski–Negut strategy}
Recall that a \emph{skew product} over a map $A:B\to B$ with fiber $M$ is a map $F:X\to X$, $X=B\times M$, of the form,
$$
	F(b, x)=(A(b), f_b(x)),
$$
i.e., a map preserving the vertical fibration $\set{b}\times M$.

Consider a skew product $F:X\to X$ over a linear hyperbolic diffeomorphism $A:\bbT^2\to \bbT^2$. Suppose that $F$ is partially hyperbolic in a strict sense with $\set{b}\times M$, $b\in B$, as central fibers. Moreover, suppose that the \emph{modified dominated splitting} condition holds,
\begin{equation*}
	\max\left(\lambda ^{-1}+\left\| \frac{\partial f_b(x)}{\partial b}\right\| _{C^0(X)}, \left\| \frac{\partial f_b(x)}{\partial x}\right\| _{C^0(X)}\right)<\lambda ,
\end{equation*}
where $\lambda $ is the greater eigenvalue of $A$.

The following theorem is a particular case of Ilyashenko–Negut Theorem \cite{YuI-Neg2} which is in turn based on the earlier research by A. Gorodetski and Ilyashenko \cite{gorodetski,Gor-YuI2,Gor-YuI-Robust}.
\begin{theorem}
	[Yu. Ilyashenko, A. Negut, \cite{YuI-Neg2}]
	Let $F$ be a $C^2$ skew product over $A:\bbT^2\to \bbT^2$, and $F$ satisfies the assumptions stated above. Then for $\rho $ small enough and a $C^2$ diffeomorphism $\perturbed:X\to X$ which is $\rho $-close to $F$ in $C^1(X)$ the following holds.
	\begin{itemize}
		\item There exists a continuous map $p:X\to \bbT^2$ such that $p\circ \perturbed=A\circ p$.
		\item The map $H:(b, x)\mapsto (p(b, x), x)$ is a homeomorphism that conjugates $\perturbed$ to a continuous skew product $\rectified$ over $A$.
		\item The fiber maps $g_b$ of $\rectified$ are smooth and are $C^1$ $O(\rho )$-close to those of $F$.
		\item The maps $H$, $H^{-1}$ and $\rectified$ are Hölder continuous in $b$ with exponent $1-O(\rho )$.
	\end{itemize}
\end{theorem}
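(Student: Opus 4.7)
The plan is to solve the functional equation $p\circ\perturbed = A\circ p$ directly by a telescoping construction along the hyperbolic directions of $A$, then verify that $H(b,x) := (p(b,x), x)$ is the required conjugacy. Lift $A$ to a linear map of $\mathbb R^2$ and look for $p$ in the form $p = \pi + q$, where $\pi$ lifts the natural projection $X\to\bbT^2$ and $q:X\to\mathbb R^2$ is small. Writing $\perturbed_1 = A\circ\pi + \varepsilon$ with $\|\varepsilon\|_{C^1} = O(\rho)$, the conjugacy equation becomes $Aq - q\circ\perturbed = \varepsilon$. Decomposing $q = q^s + q^u$ along the stable (eigenvalue $\lambda^{-1}$) and unstable (eigenvalue $\lambda$) eigendirections of $A$ decouples the system into two scalar linear equations, which I invert by iterating backward for $q^s$ and forward for $q^u$:
\[
q^s(y) = -\sum_{k\geq 1}\lambda^{-(k-1)}\varepsilon^s(\perturbed^{-k}y), \qquad q^u(y) = \sum_{k\geq 0}\lambda^{-(k+1)}\varepsilon^u(\perturbed^{k}y).
\]
Both series converge absolutely and uniformly because $\lambda^{-1} < 1$ and $\varepsilon$ is bounded, so $p$ is continuous with $\|p-\pi\|_{C^0} = O(\rho)$.

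Next, I would show $H$ is a homeomorphism by a dynamical rigidity argument. If $H(b_1,x) = H(b_2,x)$, iterating the functional equation forward and backward gives that $\pi(\perturbed^n(b_1,x))$ and $\pi(\perturbed^n(b_2,x))$ stay within $2\|q\|_{C^0} = O(\rho)$ for all $n\in\mathbb Z$, which contradicts the hyperbolicity of $A$ on the base unless $b_1 = b_2$. Injectivity, together with continuity and a degree argument on each fiber $\bbT^2\times\{x\}$, then makes $H$ a homeomorphism. The conjugate $\rectified := H\circ\perturbed\circ H^{-1}$ automatically has the form $(\beta,\xi)\mapsto(A\beta, g_\beta(\xi))$, since $H$ fixes the $\xi$-coordinate and the functional equation dictates the base action. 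Smoothness of $g_\beta$ in $\xi$ and its $C^1$ $O(\rho)$-proximity to $f_\beta$ follow from smoothness of the perturbed central fibers (center manifold theorem under strict partial hyperbolicity) together with $C^1$-closeness of $\perturbed$ to $F$.

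The main obstacle is the Hölder continuity of $H$ (equivalently of $q$) in $b$ with exponent $1 - O(\rho)$, and this is where the modified dominated splitting is essential. For two points $(b_1,x), (b_2,x)$ at distance $\delta$ in $b$, the hypothesis $\max(\lambda^{-1} + \|\partial f_b/\partial b\|_{C^0}, \|\partial f_b/\partial x\|_{C^0}) < \lambda$, preserved up to $O(\rho)$ after perturbation, guarantees that backward iterates $\perturbed^{-k}$ expand this displacement by at most a factor $\Lambda^k$ with $\Lambda = \lambda + O(\rho)$; without this bound, fiber leakage could produce a strictly larger effective expansion rate. Hence the $k$-th summand of $q^s$ is bounded by $C(\Lambda^k\delta)^\alpha$ for any Hölder exponent $\alpha\in(0,1]$ controlling $\varepsilon^s$, and the resulting geometric series $\sum_k \lambda^{-k}\Lambda^{k\alpha}$ converges precisely when $\alpha < \log\lambda/\log\Lambda = 1 - O(\rho)$. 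The analogous forward-time estimate treats $q^u$, yielding the claimed Hölder regularity and completing the proof.
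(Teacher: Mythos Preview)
The paper does not prove this theorem: it is quoted as a particular case of a result of Ilyashenko and Negut, cited from \cite{YuI-Neg2}, and used as a black box throughout the rest of the argument. There is therefore no proof in the paper to compare your attempt against.

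For what it is worth, your outline follows the standard functional-equation approach to such structural-stability statements and is broadly correct. Two small points if you pursue it: the bound on backward orbit growth that you need for the Hölder estimate on $q^s$ comes from the partial hyperbolicity hypothesis (the center condition gives $\|(\partial f_b/\partial x)^{-1}\|<\lambda$, hence $\perturbed^{-1}$ expands by at most $\lambda+O(\rho)$), not from the modified dominated splitting inequality, which is a forward-time condition and controls the analogous estimate for $q^u$; and the smoothness of $g_\beta$ in $\xi$ requires the observation that each center leaf $p^{-1}(\beta)$ is a $C^1$ graph over the $M$-factor with slope $O(\rho)$, so that projection to the second coordinate is a diffeomorphism on each leaf, rather than merely invoking the abstract center manifold theorem.
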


This theorem allows us to construct open sets of diffeomorphisms having an interesting property. The strategy involves the following steps.
\begin{itemize}
	\item Construct a skew product $F$ over a linear Anosov diffeomorphism that has the properties we are interested in.
	\item Consider a small perturbation $\perturbed$ of $F$ in the space of diffeomorphisms.
	\item Use Gorodetski–Ilyashenko–Negut Theorem to obtain a skew product $\rectified$ conjugated to $\perturbed$.
	\item Prove that $\rectified$ has the properties we are interested in.
	\item Use the Hölder continuity of $p$ to show that $\perturbed$ has these properties as well.
\end{itemize}

This strategy introduced by Yu. Ilyashenko and A. Gorodetski in \cite{Gor-YuI-Robust,Gor-YuI2} and further developed in \cite{gorodetski}. The strategy was successfully used by various authors \cite{GIKN,Yui-Neg,YuI-Kl-Salt,Klep-Salt} to obtain open sets of diffeomorphisms having non-trivial properties.
\subsection{Notation}
We will frequently study the iterations of fiber maps. We will use the following notation throughout this paper.
\begin{align*}
	f_{b,n}(x)&=\pi _M(F^n(b, x))=f_{h^{n-1}(b)}\circ \dots \circ f_b(x);\\
	M_{b,n}&=f_{h^{-n}(b),n}(M);\\
	\Amax&=\bigcap _{n\geq 0}F^n(X);\\
	M_b&=\set{x|(b, x)\in \Amax}=\bigcap _{n\geq 0}M_{b,n}.
\end{align*}

\section{Construction of \texorpdfstring{$\mathscr U$}{U}}
Let $A$ be the linear Anosov diffeomorphism given by the matrix $\left(\begin{smallmatrix}m&m+1\\m-1&m\end{smallmatrix}\right)$, where $m$ is a large natural number that we will choose later. Consider the following Markov partition for this shift \cite{AdlerWeiss} (a description with picture is available in \cite{Klimenko,YK-these}).

First, let us split the torus into two parallelograms $Q_1$ and $Q_2$ with sides parallel to the eigenvectors of $A$ as shown in Figure \ref{fig:pre-markov}. This is a pre-markov partition. Then take the image of this partition under $A$, and draw both the original partition and its preimage under $A$ in the same picture. One can show that the intersections of the parallelograms of the initial pre-Markov partition with their images under $A$ form a Markov partition for $A$.

\begin{figure}[h]
	\centering\includegraphics{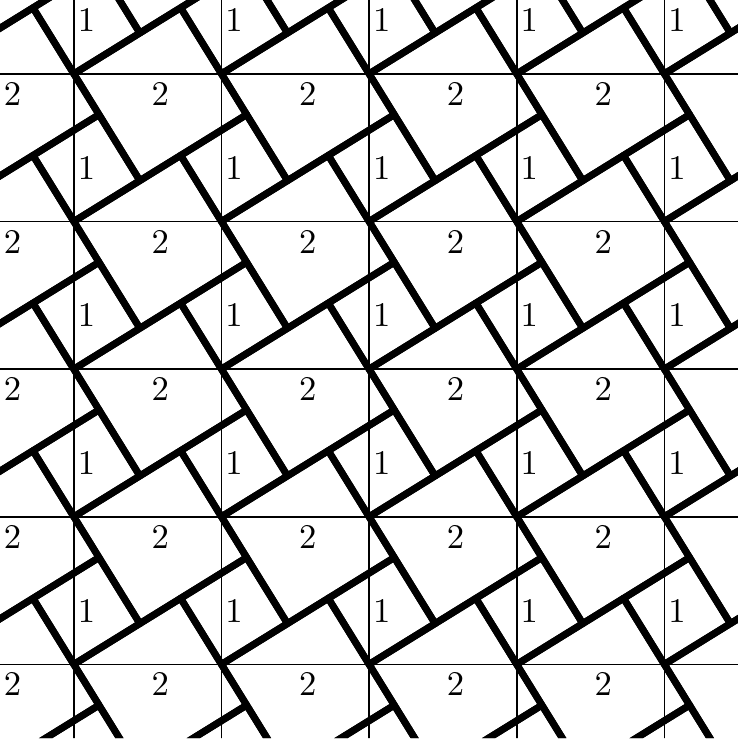}
	\caption{A pre-markov partition for $m=3$}\label{fig:pre-markov}
\end{figure}

Let $D$ be the unit disk in $\mathbb R ^d$. We will fix the fiber maps over some parallelograms of the Markov partition, then extend our map to other parallelograms.

Fix a regular simplex $p_0\dots p_d\subset D$ with side $0.5$ whose center of mass is located at the origin and a small positive number $\varepsilon $. Let $f_i$, $0\leq i\leq d$ be the linear contractions to $p_i$ with coefficient $1-\varepsilon $. Let $f_{d+1}$ be a map such that
\begin{itemize}
	\item the origin is a repellor for $f_{d+1}$, $f_{d+1}(0)=0$, $\| Df^{-1}_{d+1}(0)\| <1$;
	\item $f_{d+1}$ contracts outside a small neighborhood of the origin;
	\item $\max \| Df_{d+1}\| <1+\varepsilon $.
\end{itemize}
We also put $f_{d+2}=f_{d+1}$.

For $m$ large enough, we can choose $2d+6$ parallelograms of Markov partition $R_{ij}$, $i=0, \dots , d+2$, $j=1, 2$, such that
\begin{itemize}
	\item $R_{ij}$ is of type $(j, j)$;
	\item the distance between $R_{i, j}$ and $R_{i', j'}$ is at least $\frac{1}{10d}$ provided that $i\neq i'$.
\end{itemize}

Now we put $f_b(x)=f_i(x)$ for $b\in R_{ij}$, and extend it to a skew product on $\bbT^2\times D$ such that
\begin{itemize}
	\item all fiber maps $f_b$ are convex combinations of $f_i$;
	\item $F(b, x)$ is Lipschitz continuous in $b$ with constant $20d\varepsilon $;
	\item $f_b$ contracts in average,
	$$
		\int _{\bbT^2} \log\max_x \| Df_b(x)\| \,db<0.
	$$
\end{itemize}

Finally, in order to extend $F$ to a skew product $\widetilde F$ on $\bbT^2\times S^d$ we attach another disk $D'$ to $D$, and extend each fiber map $f_b$ by a uniformly expanding map on $D'$.

We will show that a small neighborhood of $\widetilde F$ in the space of $C^2$ diffeomorphisms of $\bbT^2\times S^d$ satisfies all the assertions of the Main Theorem.
\begin{theorem}
	\label{thm:bony-example}
	There exists $\rho $ such that the $\rho $-neighborhood of $\widetilde F$ constructed above in $C^1$ topology satisfies all assertions of the Main Theorem.
\end{theorem}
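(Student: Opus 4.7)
\emph{Plan of proof.} The argument splits into two stages following the Ilyashenko--Gorodetski--Negut strategy described above. The preparatory stage is to check that $\wt F$ satisfies the hypotheses of the Ilyashenko--Negut Theorem. Strict partial hyperbolicity with central fibers $\set{b}\times S^d$ is immediate from the construction: on $D$ the fiber maps have $\| Df_b\| \leq 1+\varepsilon $, on $D'$ they are uniformly expanding with a rate chosen well below the base rate $\lambda $, and the modified dominated splitting inequality $\max(\lambda ^{-1}+20d\varepsilon ,\, 1+\varepsilon )<\lambda $ holds whenever $m$ is large enough and $\varepsilon $ small. Fixing $\rho $ below the threshold supplied by the Ilyashenko--Negut theorem, every $C^2$ diffeomorphism $\perturbed$ in the $\rho $-neighborhood is conjugated by a Hölder homeomorphism $H$ to a skew product $\rectified$ whose fiber maps $g_b$ are $C^1$-$O(\rho )$-close to the $f_b$.

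The main stage is to verify the seven assertions of the Main Theorem for $\rectified$ and then transfer them through $H$ to $\perturbed$. Assertion (1) is immediate: the $H$-preimage of the vertical fibration of $\rectified$ is a $\perturbed$-invariant topological fibration with $S^d$-fibers. Assertions (2)--(5) are statements about the fiber attractors $M_b=\bigcap _n g_{h^{-n}(b),n}(S^d)$ of the IFS $\set{g_b}$. Because the parallelograms $R_{ij}$ are separated by at least $1/10d$, any base $b$ whose past itinerary visits $R_{ij}$ corresponds to applying a fiber map $C^1$-close to $f_i$; this furnishes a symbolic model. Bases whose past itinerary visits every $R_{ij}$ with $i\leq d$ in a prescribed cyclic order infinitely often force the backward compositions to cover a neighborhood of the simplex barycenter, producing a bone; bases whose past itinerary is generic for the SRB measure of $A$ spend most of their time over contracting $R_{ij}$ and produce a single-point $M_b$, yielding the graph part. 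Uncountability and density in assertions (2)--(5) follow from the Markov coding of $A$.

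Assertion (6) follows from the ``contracts on average'' hypothesis and the ergodicity of $A$: for Lebesgue-a.e.\ $b$ the Birkhoff averages of $\log \max _x\| Dg_b\| $ converge to a negative Lyapunov exponent, whence a Bowen-type estimate gives $\dim _H M_b<d$, and integration in $b$ yields $\dim _H\Amax<d+2$, which in particular implies $\meas \AM=0$. For assertion (7) the inclusion $\AM\subset \Amax$ is general; the reverse inclusion is where the auxiliary expanders $f_{d+1}=f_{d+2}$ are essential, since they guarantee that every point of the maximal attractor is approximated by $\omega $-limit sets of a positive-measure set of starting points. The final transfer through $H$ is safe because the topological assertions are preserved by the homeomorphism, while the Hausdorff dimension of a Hölder-$(1-O(\rho ))$ image exceeds the original by at most a factor $1/(1-O(\rho ))$, which remains strictly below $d+2$ once $\rho $ is small; the Lebesgue-null property is then automatic.

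The step I expect to be the main obstacle is assertion (7). In comparable constructions the Milnor attractor tends to be a proper subset of the maximal attractor, and forcing equality requires a uniform minimality statement for the fiber IFS, robust under both the $C^1$-perturbation $f_b\mapsto g_b$ and the non-smooth conjugacy $H$. The two extra fiber maps $f_{d+1}$ and $f_{d+2}$ are introduced precisely to arrange this, and propagating their effect through the entire pipeline --- perturbation, conjugacy, measure-theoretic almost-every-point argument --- is the delicate piece of the proof.
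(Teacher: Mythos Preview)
Your overall framework---rectify via Ilyashenko--Negut, prove for $\rectified$, transfer through $H$---matches the paper, but you have the roles of the two families of fiber maps reversed, and this breaks both the bone argument and the attractor-coincidence argument.

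The maps $f_0,\dots ,f_d$ are contractions with ratio $1-\varepsilon $. If the past itinerary of $b$ visits only $R_{ij}$ with $i\leq d$, then every $g_{A^{-n}(b)}$ contracts with ratio close to $1-\varepsilon $, so $\diam M_{b,n}\to 0$ and $M_b$ is a single point, not a bone. In the paper the bones come from the \emph{other} maps: $f_{d+1}=f_{d+2}$ has a repellor at $0$, so there is a disk $U$ with $U\Subset f_{d+1}(U)$; for any periodic $A$-orbit lying in $R_{d+1,1}\cup R_{d+2,1}$ (there are uncountably many, since both rectangles have type $(1,1)$) the iterated fiber map inherits this and $M_b\supset U$ (Lemma~\ref{lem:bones-exist}). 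Conversely, the equality $\AM=\Amax$ in the paper does not use $f_{d+1}$ at all: a general partial-hyperbolicity lemma (a closed forward-invariant set of positive measure must contain a strong-unstable arc) forces $\AM$ to meet every central fiber, hence to contain the graph part, hence to equal $\Amax$ \emph{because the graph part is dense in $\Amax$}. So the real weight falls on assertion (5), which you pass over in a sentence but which is the longest and most delicate section of the paper: near any $(b_0,m_0)\in\Amax$ one produces a nearby point whose backward fiber orbit eventually contracts forever, by pulling back along strong-stable leaves of $\perturbed$ and exploiting the covering property $J\subset\bigcup_{i\leq d} f_i(J)$ of the inner simplex $J$ with vertices $\tfrac12 p_i$. (A smaller point: the dimension bound is not ``$\dim_H M_b<d$ a.e., then integrate''---the bones have $\dim_H M_b=d$, so fiberwise estimates cannot be combined that way---but a single covering argument for $G^n(X)$ that uses average contraction on most base parallelograms and a large-deviations bound on the measure of the exceptional ones.)
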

\begin{remark}
	The first assertion immediately follows from Ilyashenko–Negut Theorem stated above.

	Below we shall prove that for every $\widetilde\perturbed\in \mathscr U$, the \emph{maximal} attractors of $\perturbed=\widetilde\perturbed|_{\bbT^2\times D}$ satisfies all assertions of the Main Theorem, then prove that this maximal attractor coincides with the likely limit set of $\perturbed$ (and, hence, of $\widetilde\perturbed$).

	We prove most assertions of the Main Theorem for a wider class of diffeomorphisms. In this case, we formulate exact assumptions on $\perturbed$ (or $\rectified$) not using settings from Theorem \ref{thm:bony-example}.
\end{remark}
\section{Existence of bones}
The existence and density of the leaves containing bones will be based on the following lemma.
\begin{lemma}
	\label{lem:bones-exist}
	Let $G:X\to X$, $X=B\times M$, be a continuous skew product over a hyperbolic diffeomorphism $A:B\to B$. Let $b\in B$ be one of the periodic points of $A$, $A^q(b)=b$. Suppose that the iterated fiber map $g_{b,q}$ has a domain $U\subset M$ such that $U\Subset g_{b,q}(U)$. Then $M_b\supset U$, and $M_{b'}$ has non-empty interior for every point $b'\in B$ of the unstable manifold of $b$.
\end{lemma}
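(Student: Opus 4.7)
I would prove the two conclusions separately. The inclusion $M_b \supset U$ is a direct induction from the hypothesis, while $M_{b'}$ having non-empty interior requires a robustness argument combined with the convergence of the backward orbit of $b'$ along the unstable manifold of $b$.

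For the first claim, the hypothesis gives $U\subset g_{b,q}(U)$, and iterating $g_{b,q}$ yields $U\subset g_{b,q}^n(U)\subset g_{b,q}^n(M)$ for every $n\geq 0$. Since $A^q(b)=b$, the right-hand side equals $M_{b,nq}$. The sequence $M_{b,n}$ is decreasing (each successive iterate applies one more fiber map), so $M_b=\bigcap_n M_{b,n}=\bigcap_n M_{b,nq}\supset U$.

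For the second claim, choose $\eta>0$ so that the $\eta$-neighborhood of $\overline U$ is contained in $g_{b,q}(U)$. The key intermediate step is a \emph{robustness} statement: by uniform continuity of $G$ on compact sets, there is $\epsilon>0$ such that whenever fibers $c_0,\dots,c_{q-1}$ satisfy $\dist(c_i,A^i(b))<\epsilon$, the composition $g_{c_{q-1}}\circ\dots\circ g_{c_0}$ is $C^0$-close to $g_{b,q}$ on $\overline U$ and still maps $U$ onto a set containing $\overline U$. Since $b'$ lies on the unstable manifold of $b$, pick $L$, a multiple of $q$, with $\dist(A^{-k}(b'),A^{-k}(b))<\epsilon$ for all $k>L$. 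For every $N\geq 0$ the composition $g_{A^{-L-Nq}(b'),Nq}$ splits into $N$ blocks of $q$ iterates over fibers $\epsilon$-close to $b,A(b),\dots,A^{q-1}(b)$; iterating the robustness step block by block starting from $M\supset U$ gives $g_{A^{-L-Nq}(b'),Nq}(M)\supset U$, and hence $M_{A^{-L}(b')}\supset U$ by monotonicity. Writing $g_{A^{-n}(b'),n}$ either as an extension (for $n>L$, using $M_{A^{-L}(b'),n-L}\supset M_{A^{-L}(b')}\supset U$) or as a prefix (for $n\leq L$) of $g_{A^{-L}(b'),L}$, one checks $M_{b',n}\supset g_{A^{-L}(b'),L}(U)$ for every $n\geq 0$, so $M_{b'}\supset g_{A^{-L}(b'),L}(U)$. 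In the applications considered the fiber maps are smooth local diffeomorphisms (being $C^1$-close to the model skew product $\widetilde F$), so this image is open.

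I expect the main obstacle to be the robustness step: one must show that the strict inclusion $U\Subset g_{b,q}(U)$ survives $C^0$-small perturbations of $g_{b,q}$ in a form that can be chained across arbitrarily many blocks without loss of margin. The natural approach is a topological-degree argument — since $g_{b,q}(\partial U)$ stays at distance $\geq \eta$ from $\overline U$, any sufficiently close perturbation $\phi$ has $\phi(\partial U)$ also disjoint from $\overline U$, and homotopy invariance of degree then forces $\overline U\subset \phi(U)$.
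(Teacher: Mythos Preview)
Your proof is correct and follows essentially the same route as the paper's: direct induction for $M_b\supset U$, then continuity/robustness of the inclusion $U\Subset g_{b,q}(U)$ under perturbation of the base point, combined with backward convergence of $b'$ along $W^u(b)$, to conclude $M_{b'}\supset g_{A^{-L}(b'),L}(U)$. The paper simply asserts the robustness step (``there exists a neighborhood $V$ of $b$ such that $U\Subset g_{\tilde b,q}(U)$ for $\tilde b\in V$'') and the openness of the final image without justification, whereas you correctly isolate these as the only nontrivial points and supply the degree argument and the appeal to the fiber maps being diffeomorphisms; otherwise the arguments are identical.
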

\begin{proof}
	The first assertion is trivial, $g_{b,qn}(M)\supset g_{b,qn}(U)=g_{b,qn-q}(g_{b,q}(U))\supset g_{b,q(n-1)}(U)\supset \dots \supset U$, thus $M_b\supset U$.

	Let us prove the second assertion. Since $U\Subset g_{b,q}(U)$, there exists a neighborhood $V\subset B$, $b\in V$, such that $U\Subset g_{\widetilde b,q}(U)$ for every $\widetilde b\in V$. Therefore, for a point $b'$ such that $h^{-qn}(b')\to b$ as $n\to \infty $, there exists $n_0$ such that $A^{-qn}(b')\in V$ for $n\geq n_0$. Due to the arguments from the first paragraph of the proof, $M_{A^{-qn_0}(b')}\supset U$, hence $M_{b'}=g_{A^{-qn_0}(b'),qn_0}(M_{A^{-qn_0}(b')})\supset g_{A^{-qn_0}(b'),qn_0}(U)$.
\end{proof}
\begin{corollary}
	For each map $\perturbed\in \mathscr U$, the \emph{maximal attractor} $\Amax$ includes an uncountable set of bones, and the set of leaves that include bones is dense in the phase space.
\end{corollary}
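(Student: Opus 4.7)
The plan is to apply Lemma~\ref{lem:bones-exist} to a carefully chosen periodic point of the base map $A$, and then transfer the conclusion from the conjugated skew product back to $\perturbed$. By the Ilyashenko--Negut theorem, each $\perturbed\in \mathscr U$ is conjugate, via the homeomorphism $H$, to a skew product $\rectified$ over $A$ whose fiber maps $g_b$ are $C^1$-close to those of $\widetilde F$; since $H$ preserves central fibers and intertwines the two maximal attractors, it suffices to prove the statement for $\rectified$.

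The key observation is that the fiber map of $\widetilde F$ coincides with $f_{d+1}$ on each of the four rectangles $R_{i,j}$ with $i\in\{d+1,d+2\}$ and $j\in\{1,2\}$, and that $f_{d+1}$ has the origin of $D$ as a repelling fixed point with $\|Df_{d+1}^{-1}(0)\|<1$. Consequently, a small closed ball $\overline U\ni 0$ satisfies $\overline U\subset f_{d+1}(U)$. For $m$ large enough one can pick the Markov rectangles so that $A$ has a fixed point $b_0\in R_{d+1,1}$; if no such fixed point is available, one replaces $b_0$ by a periodic orbit of $A$ contained entirely in the four rectangles above, which exists because these rectangles can be chosen to support a non-trivial Markov sub-system. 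In either case the iterated fiber map of $\widetilde F$ over $b_0$ is a positive power of $f_{d+1}$, and the inclusion $U\Subset f_{b_0,q}(U)$ persists after the $C^1$-small perturbation producing $\rectified$, possibly with $U$ slightly shrunken.

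Lemma~\ref{lem:bones-exist} then gives $M_{b_0}\supset U$ and, more importantly, shows that $M_{b'}$ has non-empty interior for every $b'$ in the $A$-unstable manifold $W^u(b_0)$. Since $A$ is a linear hyperbolic automorphism of $\bbT^2$ with irrational unstable slope, $W^u(b_0)$ is a densely immersed real line in $\bbT^2$: it is uncountable and its image is dense. Therefore uncountably many vertical fibers $\{b'\}\times M$ of $\rectified$ meet $\Amax$ in a set with non-empty interior, and the union of these fibers is dense in $X$. Conjugating by $H$ produces the analogous statements for the central fibers of $\perturbed$, proving the corollary.

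The one step requiring care is the production of the periodic point $b_0$: one must check that, for $m$ taken sufficiently large, the Markov rectangles of the construction can be arranged so that at least one cell on which $f_b\equiv f_{d+1}$ contains a fixed point of $A$, or failing that supports a non-trivial Markov sub-system. Once this is secured everything else is a routine application of Lemma~\ref{lem:bones-exist} together with the standard density of the unstable foliation of a linear Anosov diffeomorphism of $\bbT^2$.
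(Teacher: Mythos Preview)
Your argument is correct and follows the same strategy as the paper: pass to the rectified skew product $\rectified$, locate a periodic $A$-orbit on which the iterated fiber map is (a $C^1$-small perturbation of) a power of $f_{d+1}$, apply Lemma~\ref{lem:bones-exist}, and then read off uncountability and density from the unstable foliation of $A$.

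There is one small difference worth noting. Your last paragraph treats the existence of the periodic point $b_0$ as something still to be checked; the paper's construction resolves this directly: the rectangles $R_{d+1,1}$ and $R_{d+2,1}$ are both chosen of Markov type $(1,1)$, so the induced subshift on these two symbols is a full $2$-shift and in particular carries infinitely many periodic $A$-orbits entirely contained in $R_{d+1,1}\cup R_{d+2,1}$. The paper then invokes Lemma~\ref{lem:bones-exist} for this whole family of periodic points, whereas you use a single one and extract uncountability and density from the fact that $W^u(b_0)$ is already an uncountable dense line in $\bbT^2$. Both routes work; yours is arguably tidier, since a single periodic point suffices and avoids the paper's (loose) appeal to an ``uncountable set of periodic orbits.''
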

\begin{proof}
	Consider the rectified map $\rectified$. Since both rectangles $R_{d+1,1}$ and $R_{d+2,1}$ are of type $(1, 1)$, there is an uncountable set of periodic orbits that never leave the union $R_{d+1,1}\cup R_{d+2,1}$. Clearly, each periodic point of this type satisfies the assumptions of the previous lemma.

	Finally, there exists an uncountable set $\mathcal B$ of periodic points of $A$ such that for any point $b'$ in the unstable fiber of a point $b\in \mathcal B$ the maximal attractor intersects $\set{b'}\times D$ on a set with non-empty interior.
\end{proof}
\section{Hausdorff dimension of the attractor}
\begin{lemma}
	Let $A:\bbT^2\to \bbT^2$ be a linear Anosov diffeomorphism with eigenvalues $\lambda ^{\pm 1}$, $\lambda >1$. Consider a continuous skew product $G:X\to X$, $X=\bbT^2\times M$ over $A$, where $M$ is a $d$-dimensional manifold. Suppose each fiber map $g_b$ is Lipschitz continuous,
	$$
		\dist_M(g_b(m), g_b(m'))\leq L_b\dist_M(m, m'),
	$$
	and the fiber maps depend Hölder continuously on the point in the base,
	$$
		\dist_M(g_b(m), g_{b'}(m))\leq C_H\dist_{\bbT^2}(b, b')^\alpha .
	$$
	Finally, suppose that the fiber maps contract in average,
	$$
		\Lfavg :=  \exp\left(\int \log L_b\,d\mu \right) < 1.
	$$

	Then $\dim_H \Amax<\dim X$. Moreover, $\dim_H \Amax<\dim X-\varepsilon $, where $\varepsilon =\varepsilon (\alpha , b\mapsto L_b)$ is continuous in $\alpha $.
\end{lemma}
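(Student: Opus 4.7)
The plan is to bound the upper box dimension $\overline{\dim}_B\Amax$ (which majorizes $\dim_H$) by an explicit Markov-adapted covering, driven by a large deviation estimate for the Birkhoff sums of the Hölder potential $\varphi:=\log L_b$ against $\mu$. Put $P_k(b):=\prod_{j=1}^{k}L_{A^{-j}(b)}$; this quantity controls both the diameter of $g_{A^{-n}(b),n}(M)\supseteq M_b$ and the Lipschitz amplification arising when telescoping the Hölder variation in $b$. Because $A$ is linear Anosov and $\mu$ is Lebesgue (hence the SRB/Gibbs measure for $A$), the classical LDP for Hölder observables (Kifer, Young) supplies, for each $\theta>\Lfavg$, a constant $c=c(\theta)>0$ with $\mu\{b:P_k(b)>\theta^k\}\le e^{-ck}$.

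First I would refine the Markov partition to level $n$, obtaining $\asymp\lambda^{2n}$ rectangles $R$ of diameter $\lesssim\lambda^{-n}$, and pick $b_R\in R$. For $b,b'\in R$ and $0\le k\le n$, one has $\dist(A^{-k}b,A^{-k}b')\lesssim\lambda^{k-n}$. Telescoping the composition $g_{A^{-n}(b),n}=g_{A^{-1}(b)}\circ\cdots\circ g_{A^{-n}(b)}$ against the analogous composition for $b'$ yields
\[
\dist_M\bigl(g_{A^{-n}(b),n}(m),\,g_{A^{-n}(b'),n}(m)\bigr)\;\le\;C_H\sum_{i=1}^{n}P_{i-1}(b)\,\dist(A^{-i}b,A^{-i}b')^\alpha.
\]
Combining with $\diam g_{A^{-n}(b),n}(M)\le P_n(b)\diam M$, transferring from $b$ to $b_R$ via Hölder continuity of $L_b$, and substituting the base distance bound, I obtain
\[
D_R:=\diam\pi_M(\Amax\cap R\times M)\;\le\;C\Bigl(P_n(b_R)+\lambda^{-\alpha n}\sum_{i=0}^{n-1}P_i(b_R)\lambda^{\alpha i}\Bigr).
\]

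Fix small $\eta>0$ and set $\theta:=e^{\eta}\Lfavg$ and $\sigma:=\max(\theta,\lambda^{-\alpha})$, both in $(0,1)$. Call $R$ \emph{bad} if $P_k(b_R)>\theta^k$ for some $k\in[\eta n,n]$ and \emph{good} otherwise; by LDP and a union bound over the window $[\eta n,n]$, the bad set has $\mu$-measure $\le Ce^{-c'n}$, so at most $\lesssim e^{-c'n}\lambda^{2n}$ rectangles are bad. For good $R$ the block $i\ge\eta n$ of the sum above is geometric and bounded by $C'n\sigma^n$; the tail block $i<\eta n$ is absorbed via the trivial bound $P_i\le L_{\max}^i$ and the attenuating factor $\lambda^{-\alpha(n-i)}$, provided $\eta$ is chosen small enough that $L_{\max}^{\eta}\lambda^{-\alpha(1-\eta)}\le\sigma$. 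Thus $D_R\le C''n\sigma^n$ on the good set, while $D_R\le\diam M$ suffices on the bad set.

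I would then cover $\Amax$ by cubes of side $\delta:=\lambda^{-n}$: good rectangles contribute $\lesssim\lambda^{2n}\max\!\bigl(1,(\sigma\lambda)^{nd}\bigr)$ cubes and bad rectangles contribute $\lesssim e^{-c'n}\lambda^{(d+2)n}$ cubes. Dividing $\log N$ by $n\log\lambda$ as $n\to\infty$ gives
\[
\overline{\dim}_B\Amax\;\le\;d+2-\min\!\Bigl(\tfrac{d\,|\log\sigma|}{\log\lambda},\,\tfrac{c'}{\log\lambda}\Bigr)\;<\;d+2.
\]
Continuity of $\varepsilon$ in $\alpha$ follows because $\sigma=\max(\theta,\lambda^{-\alpha})$ is continuous in $\alpha$ while $c'$ and the Markov constants do not depend on $\alpha$. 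The hard part will be the uniform control of all intermediate Birkhoff sums $P_k$, $1\le k\le n$, rather than $P_n$ alone: a naive LDP union bound over all $k\le n$ only yields $\mu(B_n)=O(1)$. The fix above restricts the bad event to $k\in[\eta n,n]$ and absorbs the early window via the factor $\lambda^{-\alpha(n-k)}$; in the residual regime where $L_{\max}$ is too large to allow this choice of $\eta$, one instead appeals to a Doob-type exponential maximal inequality for Birkhoff sums of Hölder observables under the Gibbs measure $\mu$, which is classical but technical.
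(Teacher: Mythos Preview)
Your outline is sound, but it takes a harder route than the paper, and the difficulty you flag at the end is one the paper sidesteps entirely.

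The key difference is the coupling between $\delta$ and $n$. You fix $\delta=\lambda^{-n}$, which forces the telescoped H\"older variation to involve all intermediate products $P_k(b)$, $k\le n$, and drives you toward a maximal-type inequality. The paper instead uses the crude bound $L_b\le\Lfmax:=\max_b L_b$ at every step of the telescoping, obtaining
\[
\dist_M\bigl(g_{b,n}(m),g_{b',n}(m)\bigr)\;\lesssim\;\Lfmax^{\,n}(\delta\lambda^n)^\alpha,
\]
and then \emph{chooses} $\delta:=\bigl(\nu/(\Lfmax\lambda^\alpha)\bigr)^{n/\alpha}$ for a free parameter $\nu<1$, so the right side is $\lesssim\nu^n$. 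With this choice the vertical spread over any base rectangle $B_l$ of size $\delta$ satisfies
\[
\diam M_{B_l,n}\;\le\;\min_{b\in B_l}\diam M_{b,n}+O(\nu^n),
\]
and the only large-deviation input needed is the standard one for $P_n$ alone, namely $\mu\{b:\diam M_{b,n}>(\diam M)L^n\}\le Ce^{-\beta n}$ for $L\in(\Lfavg,1)$. The covering count then proceeds exactly as in your last paragraph, with $\asymp\delta^{-2}$ base rectangles, $\lesssim\delta^{-d}\max(L,\nu)^{nd}$ fiber balls over good rectangles, and $\lesssim\delta^{-d}$ over bad ones; the explicit defect is $\dfrac{\alpha\max(d\log L,\,d\log\nu,\,-\beta)}{\log\Lfmax+\alpha\log\lambda-\log\nu}$.

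Two remarks on your version. First, the maximal inequality is not actually needed: if you decouple $\theta$ from $\eta$ and simply pick any $\theta\in(\lambda^{-\alpha},1)$ (which is always possible and still exceeds $\Lfavg$), you land in your case $\sigma=\theta>\lambda^{-\alpha}$, and then the early-window condition $L_{\max}^{\eta}\lambda^{-\alpha(1-\eta)}\le\sigma$ holds for all sufficiently small $\eta$. Second, the hypotheses give H\"older continuity of $b\mapsto g_b$, not of $b\mapsto L_b$; the transfer to a representative point should be done via $\min_{b\in R}\diam M_{b,n}$ (as the paper does) rather than by moving the Lipschitz profile itself.
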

\begin{proof}
	[Plan of the proof]
	Let $n$ be a large natural number, $\delta $ be a small positive number. Consider a covering $\set{B_l}$ of the base by $\asymp \delta ^{-2}$ parallelogram of size $\asymp \delta $ with sides parallel to the eigenvectors of $A$. Since the fiber maps contract in average, $G^n(X)$ intersects most vertical fibers on sets of exponentially small diameter,
	$$
		\mu \Set{b\in \bbT^2|\diam M_{b,n} > L^n}<e^{-\beta n},\quad \text{where $\beta >0$, $\Lfavg<L<1$}.
	$$
	Therefore, most parallelograms $B_l$ contain a point $b\in B_l$ such that $\diam M_{b,n}<L^n$. On the other hand, if $\delta $ is small enough, the restriction of $G^n$ to $A^{-n}(B_l)\times M$ sends each horizontal plaque $A^{-n}(B_l)\times \set{m}$ to an “almost” horizontal plaque, thus the projection of $G^n(X)\cap (B_l\times M)$ to $M$ is included by a small neighborhood of any $M_{b,n}$, $b\in B_l$.
	
	Finally, for most parallelograms the set $M_{B_l,n}$ has an exponentially small diameter, and we can cover $B_l\times M_{B_l,n}$ by $\ll \delta ^{-d}$ balls of diameter $\delta $. For the rest of parallelograms, we just cover $B_l\times M$ by $\asymp \delta ^{-d}$ balls of diameter $\delta $.
\end{proof}
\begin{proof}
	\step{The image of a small horizontal plaque and the choice of $\delta $} Let $B$ be a small subset of $\bbT^2$, $\diam B=\delta $, let $n$ be a natural number. Consider a horizontal plaque $A^{-n}(B)\times \set{m}$ and its image under $G^n$. Let us estimate the size of this image in the vertical direction, i.e., the diameter of the projection of $G^n(A^{-n}(B)\times \set{m})$ to $M$. Take two points $b, b'\in A^{-n}(B)$. Note that $\dist(A^i(b), A^i(b'))\leq \delta \lambda ^n$ for all $i=0,\dots ,n-1$. Therefore,
	$$
		\dist_M(g_{b,i+1}(m), g_{b',i+1}(m))\leq \Lfmax\dist_M(g_{b,i}(m), g_{b',i}(m))+C_H(\delta \lambda ^n)^\alpha ,
	$$
	where $\Lfmax=\max_{b\in B} L_b$. Substituting each inequality to the next one, we have,
	$$
		\dist_M(g_{b,n}(m), g_{b',n}(m))\leq C_H(\delta \lambda ^n)^\alpha (1+\Lfmax+\dots +\Lfmax^{n-1})<\frac{C_H}{\Lfmax-1}\Lfmax^n\delta ^\alpha \lambda ^{n\alpha }.
	$$
	Next, fix a number $\nu <1$ and put $\delta := \left(\frac{\nu }{\Lfmax \lambda ^\alpha }\right)^{\frac n\alpha }$. Then due to the previous inequality,
	$$
		\dist_M(g_{b,n}(m), g_{b',n}(m))<\frac{C_H}{\Lfmax-1}\nu ^n.
	$$

	\step{Estimate on the diameter of $M_{B_l,n}$} Due to the previous inequality,
	$$
		\diam(M_{B_l,n})\leq \min_{b\in B_l} \diam(M_{b,n})+\frac{2C_H}{\Lfmax-1}\nu ^n.
	$$
	Fix $L\in (\Lfavg, 1)$. Due to Special Ergodic Theorem (a version of Large Deviation Theorem, see \cite{Saltykov}), there exists $\beta >0$ and $C>0$ such that
	$$
		\mu \Set{b\in \bbT^2|\frac 1n\sum _{i=1}^n\log L_{A^{-i}(b)}>\log L}<Ce^{-\beta n}
	$$
	for $n$ large enough. Therefore, for all but at most $Ce^{-\beta n}\delta ^{-2}$ parallelograms $B_l$ we have
	$$
		\min_{b\in B_l} \diam(M_{b,n})\leq \diam M\times L^n,
	$$
	thus
	$$
		\diam(M_{B_l,n})\leq \diam M\times L^n+\frac{2C_H}{\Lfmax-1}\nu ^n.
	$$

	\step{Covering of $G^n(X)$} Let us construct a covering of $G^n(X)$ by balls of diameter $\delta $. If $B_l$ satisfies the previous inequality, then we cover $B_l\times M_{B_l,n}$ by $\precsim \delta ^{-d}\max(L, \nu )^{nd}$ balls of size $\delta $. Otherwise, we cover $B_l\times M$ by $\asymp \delta ^{-d}$ balls of size $\delta $. Finally, the number of balls of diameter $\delta $ used in this covering is at most
	$$
		C_2(\delta ^{-2}\times \delta ^{-d}\max(L, \nu )^{nd}+e^{-\beta n}\delta ^{-2}\times \delta ^{-d})\precsim C_2\delta ^{-d-2}\max(L^d, \nu ^d, e^{-\beta })^n,
	$$
	therefore the Hausdorff dimension of the maximal attractor is at most
	\begin{multline*}
		\dim_H\Amax\leq \liminf_{n\to \infty }\frac{\log(C_2\delta ^{-d-2}\max(L^d, \nu ^d, e^{-\beta })^n)}{-\log \delta }\\
		=d+2+\frac{\alpha \max(d\log L, d\log \nu , -\beta )}{\log\Lfmax + \alpha \log \lambda  - \log \nu }<d+2=\dim X.
	\end{multline*}
	Clearly, this inequality provides a lower bound for $\dim X-\dim_H\Amax$ that depends only on $\alpha $ and $b\mapsto L_b$.
\end{proof}
\begin{corollary}
	For a diffeomorphism $\perturbed\in \mathscr U$, the Hausdorff dimension of the maximal attractor is less than $d+2$.
\end{corollary}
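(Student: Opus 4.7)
The plan is to reduce the corollary to the preceding Lemma by passing through the Hölder conjugacy to the rectified skew product. By the Ilyashenko–Negut theorem, for $\rho$ small and $\perturbed$ $\rho$-close to $\widetilde F$ in $C^1$, there is a homeomorphism $H:(b,x)\mapsto (p(b,x),x)$ conjugating $\perturbed$ to a continuous skew product $\rectified$ over $A$; moreover $H$, $H^{-1}$ and $\rectified$ are Hölder continuous in $b$ with exponent $\alpha=1-O(\rho)$, and the fiber maps $g_b$ of $\rectified$ are $C^1$ $O(\rho)$-close to the fiber maps $f_b$ of $\widetilde F$.

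Next I would verify that $\rectified$ meets the hypotheses of the preceding Lemma. The maps $f_b$ are Lipschitz by construction (convex combinations of the $f_0,\dots,f_{d+2}$, all smooth on the compact fiber), and the $C^1$-closeness transfers this to the $g_b$ with Lipschitz constants $L_b$ close to those of $f_b$. Hölder dependence on $b$ is given by the theorem. Contraction in average, $\int\log L_b\,db<0$, holds strictly for $\widetilde F$ by construction, hence persists under $C^1$-perturbation for $\rho$ small. Applying the Lemma to $\rectified$ yields $\dim_H \Amax(\rectified)\leq d+2-\eta$ for some $\eta>0$ that, by the ``continuous in $\alpha$'' clause of the Lemma, stays bounded below by a fixed positive constant as $\rho\to 0$ (hence $\alpha\to 1$).

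Finally, I would transfer the bound from $\rectified$ to $\perturbed$ via the conjugacy. Since $\Amax(\perturbed)=H^{-1}(\Amax(\rectified))$ and $H^{-1}$ is Hölder with exponent $\alpha$ on $\bbT^2\times D$ (Hölder in $b$ with exponent $\alpha$, identity in $x$, so overall Hölder with exponent $\alpha$), we obtain
$$
\dim_H \Amax(\perturbed)\leq \frac{\dim_H \Amax(\rectified)}{\alpha}\leq \frac{d+2-\eta}{1-O(\rho)}.
$$
The main obstacle is precisely this last inequality: the bound survives only if the gap $\eta$ produced by the Lemma dominates the Hölder loss $O(\rho)$ from the conjugacy. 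This is exactly what the continuity-in-$\alpha$ clause is designed to secure: for $\rho$ small enough, $\eta>(d+2)\,O(\rho)$, and the right-hand side above is strictly less than $d+2$.
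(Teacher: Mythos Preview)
Your proposal is correct and follows essentially the same route as the paper: apply the preceding Lemma to the rectified skew product $\rectified$, then push the bound back to $\perturbed$ through the H\"older conjugacy $H^{-1}$, using the continuity-in-$\alpha$ clause to ensure the gap $\eta$ beats the H\"older loss. The paper phrases the final balance as $\dim_H\Amax(\rectified)<\alpha(d+2)$ (equivalently $\eta>(1-\alpha)(d+2)$) and then divides by $\alpha$, which is exactly your inequality; it also fixes a common upper bound $b\mapsto L_b$ valid for both $F$ and $\rectified$ so that only the $\alpha$-dependence of $\varepsilon$ matters, which is implicit in your argument as well.
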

\begin{proof}
	Let us apply the previous lemma for the original example $F$. Let $\varepsilon _0$ be the estimate on $d+2-\dim_H\Amax$ provided by the lemma.

	Recall that the fiber maps of the rectified map $\rectified$ are $C^1$ close to those of the original map $F$. Therefore, for a small enough perturbation we have the same upper estimate on $L_b$. Hence, if the distance between $F$ and $\perturbed$ is small enough,
	$$
		\dim_H\Amax(\rectified) < \alpha (d+2),
	$$
	thus
	$$
		\dim_H\Amax(\perturbed) < \alpha ^{-1}\dim_H\Amax(\rectified) < d+2.
	$$
\end{proof}
\section{Density of the graph}
\begin{lemma}
	For $m$ large enough, for any $\perturbed\in \mathscr U$ the graph part of the attractor is dense in $\Amax$.
\end{lemma}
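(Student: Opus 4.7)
The plan is to prove the statement by passing through the rectified skew product $\rectified$ (conjugate to $\perturbed$ via the Ilyashenko--Negut homeomorphism $H$), since topological density of the graph part is preserved under conjugation.

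First I would show that the set $B := \{b \in \bbT^2 : M_b \text{ is a singleton}\}$ has full Lebesgue measure. Since $A^{-1}$ is an ergodic Lebesgue-preserving Anosov diffeomorphism, Birkhoff's ergodic theorem applied to the integrable function $b \mapsto \log L_b$ yields
\[
\frac{1}{n}\sum_{k=1}^n \log L_{h^{-k}(b)} \xrightarrow{n\to\infty} \int_{\bbT^2} \log L_b \, db = \log \Lfavg < 0 \quad \text{for a.e. } b,
\]
where the integral is negative by the average-contraction property built into $F$ and inherited by $\rectified$ for $\rho$ small. Hence $\diam M_{b,n} \leq \diam(D)\prod_{k=1}^n L_{h^{-k}(b)}$ decays exponentially, and $M_b$ is a singleton.

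Second I would record the forward invariance: unwinding $M_b = \bigcap_n M_{b,n}$ and using compactness gives $M_{h(b)} = g_b(M_b)$, so if $M_b = \{x\}$ then $M_{h(b)} = \{g_b(x)\}$. The graph part of $\Amax$ is therefore $\rectified$-invariant under forward iteration, and the forward orbit of any graph point stays in the graph part.

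Third, I would promote density in measure to topological density by exhibiting a graph point with dense forward orbit. For this I would take a weak-$*$ accumulation point $\mu$ of the Cesàro averages $\frac{1}{n}\sum_{k=0}^{n-1}(\rectified^k)_*\mathrm{Leb}_X$: it is $\rectified$-invariant, supported on $\Amax$, and projects to Lebesgue on $\bbT^2$. Its disintegration against the base gives $\mu_b = \delta_{M_b}$ for $b \in B$, so the graph part has full $\mu$-measure. Provided one can verify $\mathrm{supp}\,\mu = \Amax$, a $\mu$-typical point is then a graph point with dense forward orbit in $\Amax$, and density of the graph part follows.

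The main obstacle is justifying $\mathrm{supp}\,\mu = \Amax$ — that pushing Lebesgue forward does not starve any open piece of $\Amax$; heuristically this follows from $\perturbed^n(X) \supset \Amax$ together with the mixing of $A$, but the rigorous verification is the core technical step. An alternative direct route avoiding $\mu$ is a symbolic Markov argument: given $(b,x) \in \Amax$ and $\delta > 0$, splice the first $N$ backward symbols of $b$ with a tail lying in the strongly contracting rectangles $R_{i,1}$, $i \leq d$, to produce $b' \in B$ with $d(b,b') \lesssim \lambda^{-N}$; the delicate step is then controlling $d(x, M_{b'})$ using the Hölder dependence of $g_b$ on $b$ (exponent $1 - O(\rho)$), which is why $m$ (and hence $\lambda$) must be taken large so that the contracting tail absorbs the accumulated Hölder error over the first $N$ unmodified steps.
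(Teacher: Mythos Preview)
Your measure-theoretic route has a genuine gap that is essentially circular. You correctly observe that the limiting measure $\mu$ disintegrates as $\mu_b=\delta_{M_b}$ for Lebesgue-a.e.\ $b$, which forces $\mu$ to be concentrated on the graph part, hence $\mathrm{supp}\,\mu\subseteq\overline{\text{graph part}}$. Therefore proving $\mathrm{supp}\,\mu=\Amax$ is \emph{equivalent} to proving that the graph part is dense in $\Amax$ --- exactly the statement at hand. Your heuristic (``pushing Lebesgue forward does not starve any open piece of $\Amax$'') does not rescue this: the bones carry positive $d$-dimensional volume in their fibers, but the Ces\`aro averages of Lebesgue really do collapse onto the graph and assign zero mass to the bone interiors, so there is no a~priori reason the support should cover them. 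The dense-orbit step is also unsupported (it would need ergodicity of $\mu$, which you do not verify), though it is in any case redundant once $\mathrm{supp}\,\mu=\Amax$ is known.

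The paper's proof is entirely different and avoids ergodic theory. It is a direct geometric construction: given $(b_0,m_0)\in\Amax$ and a neighbourhood $\mathcal V$, one exhibits a graph point in $\mathcal V$ using the \emph{strong stable foliation} of $\perturbed$ together with the auxiliary simplex $J$ with vertices $0.5\,p_i$. The dichotomy is: either the backward $\rectified$-orbit of $(b_0,m_0)$ never enters $\bbT^2\times J$, in which case every backward fiber map expands and $(b_0,m_0)$ is itself a graph point; or some backward iterate lands in $\bbT^2\times J$, and then one takes a strong-stable arc $\gamma_{ss}\subset\mathcal V$, uses the Markov combinatorics of $A$ (the projection $p\circ\perturbed^{-n}\gamma_{ss}$ is long enough to cross some rectangle $R_{i_0 j}$ with $f_{i_0}(J)$ covering the relevant $4k_0$-ball), and inductively produces nested sub-arcs $\gamma_{ss}^{(n+1)}\supset\gamma_{ss}^{(n+2)}\supset\cdots$ whose backward iterates remain in $\bbT^2\times J$ under contracting fiber maps. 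The common point of this nest is the desired graph point. Your symbolic-splicing alternative is closer in spirit, but the ``delicate step'' you flag --- controlling $d(x,M_{b'})$ after modifying the tail --- is exactly what the paper handles via this strong-stable-leaf mechanism and the covering property $f_i(J)\supset B_{4k_0}(x)$ for $x\in J$, not by H\"older estimates on $g_b$ alone.
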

\begin{proof}
	It is easy to show that the strong stable direction of $F$ has slope at most
	$$
		k_0=\left\| \frac{\partial f_b(x)}{\partial b}\right\| \left(\lambda -\left\| \frac{\partial f_b(x)}{\partial x}\right\| \right)^{-1}<\frac{20d\varepsilon }{\lambda -1-\varepsilon }.
	$$
	Choose a strong stable cone field such that each line inside a stable cone has slope at most $2k_0$. If the perturbation is small enough, this cone field is invariant under $\perturbed^{-1}$, hence the strong stable leaves of $\perturbed$ have slope at most $2k_0$.

	Consider the regular simplex $J$ with vertices $p_0'=0.5p_i$. For $m$ large enough, $2k_0<0.01\varepsilon $, hence for every point $x\in J$ there exists $i\in \set{0,\dots ,d}$ such that $f_i(J)$ includes the $4k_0$-neighborhood of $x$. Next, if the perturbation is small enough, then the same holds for all maps $f_b$, $b\in R_{ij}$.

	Now, let us prove that the graph part of $\Amax(\perturbed)$ is dense in $\Amax(\perturbed)$. Consider a point $(b_0, m_0)\in \Amax(\perturbed)$ and its small neighborhood $\mathcal V$. Let $\gamma _u$ be a small arc of the unstable leaf of $A$ passing through $p(b_0, m_0)$, $m_0\in V_M\subset D$ be a small neighborhood of $m_0$. Clearly, if both $\gamma _u$ and $V_M$ are small enough, $\mathcal V$ includes $H^{-1}(\gamma _u\times V_M)$. Therefore, $\mathcal V$ contains the saturation $\mathcal V'$ of this set by small arcs of the strongly stable leaves of $\perturbed$. Without loss of generality, we may assume that $p(\mathcal V')$ is a parallelogram with sides parallel to the eigenvectors of $A$.

	Let $N$ be a number such that the side of $A^{-N}(p(\mathcal V'))$ going in the stable direction has length at least $2$. Consider two cases.

	\emph{Case 1}. The preimages of $\set{b_0}\times V_M$ under $G^n$, $n\geq N$, never intersect $\bbT^2\times J$. In this case each inverse fiber map $g_{A^{-n-1}(b_0)}^{-1}$ expands on $g_{A^{-n}(b_0),n}^{-1}(V_M)$. Therefore, at most one point of $\set{b_0}\times V_M$ belongs to the maximal attractor of $G$. On the other hand, $(b_0, m_0)\in \Amax(G)$ and the intersection $(\set{b_0}\times D)\cap \Amax(G)$ is a connected set. Therefore, this intersection is a single point, i.e., $(b_0, m_0)$ belongs to the graph part of the attractor.

	\emph{Case 2}. There exists $n\geq N$ such that $G^{-n}(\set{b_0}\times V_M)\cap \bbT^2\times J$ is not empty. Denote by $(b_0, m_1)$ one of the points of $(\set{b_0}\times V_M)\cap G^n(\bbT^2\times J)$. Consider the strongly stable leaf of $\perturbed$ passing through $H^{-1}(b_0, m_1)$. Let us intersect this leaf with $\mathcal V'$, and take the connected component $\gamma _{ss}$ passing through $H^{-1}(b_0, m_1)$. Let us prove that $\gamma _{ss}$ contains a point of the graph part of $\Amax$.

	Consider the preimage of $\gamma _{ss}$ under $\perturbed^n$. Note that $p\circ \perturbed^{-n}\circ \gamma _{ss}$ is a segment on the strongly stable leaf of $A$ passing through $b_0$ of length at least $2$. Cutting $\gamma _{ss}$ if needed, we can and will assume that $p\circ \perturbed^{-n}\circ \gamma _{ss}$ has length exactly $2$, hence the projection of $\perturbed^{-n}\circ \gamma _{ss}$ to the fiber $D$ has diameter at most $4k_0$.

	Since $g_{A^{-n}(b_0),n}^{-1}(m_1)\in J$, there exists $i_0\in \set{0,\dots ,d}$ such that $G_b(J)$ includes the $4k_0$ neighborhood of $g_{A^{-n}(b_0),n}^{-1}(m_1)$ for every $b\in R_{i_0j}$. In particular, $G_b(J)$ includes the projection of $\perturbed^{-n}\circ \gamma _{ss}$ to the fiber. Recall that the length of $p\circ \perturbed^{-n}\circ \gamma _{ss}$ is equal to $2$, hence this curve intersects both “unstable direction” sides of one of the pre-markov parallelograms. Therefore, this curve intersects both “unstable direction” sides of one of the rectangles $R_{i_0j}$. Finally, we obtain an arc $\gamma _{ss}^{(n+1)}\subset \gamma _{ss}$ such that $\perturbed^{-n-1}(\gamma _{ss}^{(n+1)})\subset \bbT^2\times J$. Due to the construction of the Markov partition and the choice of $R_{ij}$, the image of $\perturbed^{-n-1}(\gamma _{ss}^{(n+1)})$ under $p$ intersects both “unstable direction” sides of the same pre-Markov rectangle. Hence, we can apply the same construction to $\gamma _{ss}^{(n+1)}$ and $n+1$, etc.

	Finally, we obtain a sequence of arcs
	$$
		\gamma _{ss}\supset \gamma _{ss}^{(n+1)}\supset \gamma _{ss}^{(n+2)}\supset \dots 
	$$
	such that $(b, m)\in \gamma _{ss}^{(n+k)}$ implies that $\perturbed^{-n-k}(b, m)\in \bbT^2\times J$ and $g_{A^{-n-k}(p(b, m))}$ is a contracting map with coefficient at most $1-\varepsilon $. Let $(b, m)$ be the unique point that belongs to all these arcs. Then $(b, m)\in \mathcal V\cap \Amax(\perturbed)$ and all fiber maps $g_{A^{-n}(p(b, m))}$, $n>N$ contract. Therefore, $(b, m)$ belongs to the graph part of $\Amax(\perturbed)$.
\end{proof}
\section{Coincidence of attractors}
In previous sections, we proved that the maximal attractor $\Amax(\perturbed)$ has all the properties stated in the Main Theorem for the likely limit set $\AM$. In this section, we shall prove that $\Amax(\perturbed)=\AM(\perturbed)$ thus finishing the proof of the Main Theorem.

In order to prove the coincidence of attractors, we shall show that $\AM(\perturbed)$ cannot be disjoint with a fiber of the invariant fibration $\mathcal F^c$. We shall need the following lemma. It must be known for ages, but I failed to find a reference. A very similar result was proved (though not formulated as an isolated statement) in \cite[p.~ 215]{BonDiazViana}. I would like to thank V.~ Kleptsyn who pointed me to this book. The following proof essentially repeats the last paragraph of the proof of Proposition 11.1 in this book, providing much more details.
\begin{lemma}
	Consider a dynamical system $\perturbed:X\to X$ partially hyperbolic in the broad sense, see \ref{def:hyperbolic-broad}. Suppose that $\lambda >1$ and $\dim E_x^u=1$. Consider a closed set $V\subset X$ such that $\perturbed(V)\subset V$. Then
	\begin{itemize}
		\item either $\mu V=0$,
		\item or $V$ includes an arc of a stable leaf of $\perturbed$.
	\end{itemize}
\end{lemma}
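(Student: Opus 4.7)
The plan is to assume $\mu V > 0$ and construct an arc of a stable leaf of $\mathcal{G}$ inside $V$, following the argument at the end of the proof of Proposition~11.1 in \cite{BonDiazViana}. The three main ingredients are the Lebesgue density theorem, Fubini along the absolutely continuous invariant foliations, and a renormalization/compactness step using the dynamics.

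If $\mu V > 0$, the Lebesgue density theorem gives a density point $x_0 \in V$. Near $x_0$, the integrable stable and unstable foliations of $\mathcal{G}$ provide a local product chart; since $\dim E^u = 1$ and both foliations are absolutely continuous (standard in partial hyperbolicity), Fubini in this chart produces a local stable plaque $\Sigma$ with $\mu^s(V \cap \Sigma)/\mu^s(\Sigma)$ arbitrarily close to $1$. A second application of Lebesgue density inside $\Sigma$ yields a $\mu^s$-density point $y_0 \in V \cap \Sigma$ of the closed set $V \cap \Sigma$.

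The heart of the argument is a renormalization that promotes density approaching $1$ at infinitesimal scales to density equal to $1$ on a stable plaque of uniformly positive size. For each large $n$, I would select a point $y_n \in V$ (close to a stable density point on an appropriate plaque) together with a stable ball $B_n \subset W^s_{\mathrm{loc}}(y_n)$ chosen so that the forward image $\mathcal{G}^n(B_n)$ is a stable plaque of some fixed size $\epsilon > 0$ inside $W^s(\mathcal{G}^n(y_n))$, while still $\mu^s(V \cap B_n)/\mu^s(B_n) \to 1$. Bounded distortion of $\mathcal{G}^n$ along stable plaques (uniform in $n$ because $\mathcal{G}$ is at least $C^{1+\alpha}$) transfers the density bound to the image, and the hypothesis $\mathcal{G}(V) \subset V$ forces $\mathcal{G}^n(V \cap B_n) \subset V$. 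By compactness of $X$, closedness of $V$, and continuity of the stable foliation, pass to a subsequence $\mathcal{G}^{n_k}(y_{n_k}) \to z^* \in V$ and $\mathcal{G}^{n_k}(B_{n_k}) \to W^s_\epsilon(z^*)$ in the Hausdorff topology.

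In the limit, $V \cap W^s_\epsilon(z^*)$ has $\mu^s$-measure equal to $\mu^s(W^s_\epsilon(z^*))$. Its complement inside this stable ball is open (since $V$ is closed) and has $\mu^s$-measure zero, hence empty; so $W^s_\epsilon(z^*) \subset V$, producing the required arc of a stable leaf. The main obstacle is the renormalization step — coordinating $y_n$, the radius of $B_n$, and the iterates so that the density bound survives and the limiting plaque at $z^*$ has positive size. This is the technical heart of the argument, where the 1-dimensionality of $E^u$ (making the transverse Fubini argument clean and the geometry of the iterates controllable) plays the essential role.
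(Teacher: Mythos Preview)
There is a genuine gap, but it originates in a typo in the statement: the lemma should produce an arc of an \emph{unstable} leaf, not a stable one. The paper's own proof concludes ``$\gamma_0$ is an arc of a leaf of the unstable foliation'', and the subsequent lemma uses it exactly that way. You took the word ``stable'' at face value and built the renormalization in the wrong direction.

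Concretely, your renormalization step cannot work as written. You take stable balls $B_n \subset W^s_{\mathrm{loc}}(y_n)$ and ask that the \emph{forward} image $\mathcal G^n(B_n)$ be a stable plaque of fixed size $\epsilon$. But $\mathcal G$ contracts stable plaques, so this forces $\diam B_n \gtrsim \epsilon\,\mu^{-n}\to\infty$, destroying the ``density close to $1$ at small scales'' information you obtained from the Lebesgue density point. Iterating backward instead expands stable plaques, but then you only know $\mathcal G^{-n}(V\cap B_n)\subset \mathcal G^{-n}(V)$, and forward invariance $\mathcal G(V)\subset V$ gives no control on $\mathcal G^{-n}(V)$. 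So neither direction closes the argument for a stable arc. (There is also a side issue: in the broad-sense setting here $E^s$ is merely dominated, not contracting, and need not integrate to an absolutely continuous foliation, so your Fubini-along-stable-plaques step is not automatic.)

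The paper's approach uses the $1$-dimensionality of $E^u$ directly, not transversally. Near a Lebesgue point of $V$ one takes a smooth $1$-dimensional foliation whose tangents lie in the \emph{unstable} cone (the paper writes ``stable cone'', another evident slip), finds a leaf $\gamma$ meeting $V$ in positive $1$-dimensional measure, and then iterates $\gamma|_{(-\delta,\delta)}$ \emph{forward} until its length exceeds $1$. Forward invariance pushes $V\cap\gamma$ forward into $V$; Denjoy-type bounded distortion on these one-dimensional expanding curves preserves the density estimate; Arzel\`a--Ascoli gives a $C^1$ limit curve $\gamma_0$, which is an arc of an unstable leaf with $\mu(\gamma_0^{-1}(V))=1$, hence $\gamma_0\subset V$. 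This is where $\dim E^u=1$ and $\lambda>1$ actually do the work.
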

\begin{proof}
	Suppose $\mu V>0$. Take a Lebesgue point $p_0$ of $V$. Near $p_0$, take a smooth $1$-dimensional foliation such that the tangent lines to the leaves belong to the stable cone field. Due to Fubini Theorem, the intersection of $V$ with one of the leaves $\gamma $ has positive $1$-dimensional Lebesgue measure.

	Without loss of generality, we can assume that $0$ is a Lebesgue point of $\gamma ^{-1}(V)$. Take a small positive number $\delta $ such that
	$$
		\mu (\gamma ^{-1}(V)\cap (-\delta , \delta ))>2\delta (1-\varepsilon ).
	$$
	Let $n(\delta )$ be the least natural number such that the image $\gamma _\delta $ of the curve $\gamma |_{(-\delta , \delta )}$ under $\perturbed^{n(\delta )}$ is longer than one. Let $\gamma _\delta ':(0, l(\delta ))\to X$ be the curve $\gamma _\delta $ parametrized by arc length. The Denjoy Distortion Lemma implies that the distortion of the map $\perturbed^{n(\delta )}$ on $\gamma ((-\delta , \delta ))$ is bounded, hence
	$$
		\mu (\gamma _\delta '^{-1}(V)) > (1-C\varepsilon )l(\delta ).
	$$
	Consider the family of curves $\gamma _\delta '$, $\delta \to 0$. Due to Arzelà–Ascoli Theorem, this family has a limit point in the space of $C^1$-smooth curves. Denote by $\gamma _0$ the limit curve parametrized by arc length. Clearly, $\gamma _0$ is an arc of a leaf of the unstable foliation of $\perturbed$. Since $V$ is closed, the inequality above implies that $\mu (\gamma _0^{-1}(V))=1$, hence $V$ includes $\gamma _0$. This completes the proof of the lemma.
\end{proof}
\begin{lemma}
	Let $\perturbed:X\to X$ be a smooth diffeomorphism partially hyperbolic in the strict sense. Suppose that there exist a map $p(X):X\to B$ and a transitive Anosov diffeomorphism $A:B\to B$ such that
	\begin{itemize}
		\item $p\circ \perturbed = A\circ p$ and preimages $p^{-1}(b)$, $b\in B$ are compact sets;
		\item the preimages $p^{-1}(b)$, $b\in B$ are leaves of the central foliation (in particular, the central distribution is integrable);
		\item the strongly unstable foliation has dimension one, and projects to the unstable foliation of $A$.
	\end{itemize}
	Then the likely limit set $\AM(\perturbed)$ intersects each fiber $p^{-1}(b)$, $b\in B$, by at least one point.
\end{lemma}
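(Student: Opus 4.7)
The plan is to show $p(\AM) = B$, which is equivalent to the stated conclusion. Since $\AM$ is fully $\perturbed$-invariant (this follows from the minimality in the definition of $\AM$ together with the fact that every $\omega$-limit set is itself fully invariant), the relation $p\circ\perturbed=A\circ p$ makes $p(\AM)\subseteq B$ a closed, fully $A$-invariant set. I consider the closed, fully $\perturbed$-invariant set $V=p^{-1}(p(\AM))$, which contains $\AM$. Since $\perturbed$ is also partially hyperbolic in the broad sense --- group $E^s\oplus E^c$ against the one-dimensional unstable bundle $E^u$ --- the preceding lemma applies to $V$: either $\mu V=0$, or $V$ includes an arc of an unstable leaf of $\perturbed$.

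In the second alternative, let $\gamma\subset V$ be such an unstable arc. By the hypothesis that the strong unstable foliation of $\perturbed$ projects to the unstable foliation of $A$, the image $p(\gamma)\subset p(\AM)$ is an arc of an unstable leaf $\ell$ of $A$. Forward $A$-invariance of $p(\AM)$ together with the exponential expansion of $p(\gamma)$ by iterates of $A$ along $\ell$ gives $\ell=\bigcup_{n\geq 0}A^n(p(\gamma))\subset p(\AM)$. Unstable leaves of the transitive hyperbolic toral automorphism $A$ are dense in $\bbT^2$, so $p(\AM)\supset\overline\ell=B$, and we are done.

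The main obstacle is ruling out the first alternative, $\mu V=0$. The plan here is to invoke absolute continuity of the strong unstable foliation of $\perturbed$: for $\mu_X$-a.e.\ $x$ we have $\omega(x)\subset\AM$, hence $A^n(p(x))\to p(\AM)$, and by absolute continuity this property survives restriction to a Lebesgue-typical point on a Lebesgue-typical strong unstable leaf of $\perturbed$. Because $p$ restricted to a strong unstable leaf of $\perturbed$ is a $C^1$ diffeomorphism onto an unstable leaf of $A$, the property transfers to Lebesgue-a.e.\ point on some unstable leaf of $A$. On the other hand, ergodicity of Lebesgue measure under $A$ combined with absolute continuity of the $A$-unstable foliation guarantees that Lebesgue-a.e.\ point on every unstable leaf of $A$ has orbit equidistributed in $\bbT^2$; comparing the two conclusions on the same unstable leaf of $A$ produces a point $b$ with $\omega_A(b)=B$ and $A^n(b)\to p(\AM)$, forcing $p(\AM)=B$ in this alternative as well.
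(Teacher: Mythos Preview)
Your second alternative is correct and matches the paper's endgame: once $V$ contains a strong-unstable arc $\gamma$, its image $p(\gamma)$ is a nondegenerate arc of an $A$-unstable leaf inside the closed $A$-invariant set $p(\AM)$, and density of unstable leaves forces $p(\AM)=B$.

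The gap is in the first alternative. Notice first that your argument there never uses the hypothesis $\mu V=0$; if it worked it would prove the lemma outright and make the case split and the appeal to the preceding lemma superfluous. The step that actually fails is the assertion that $p$ restricted to a strong unstable leaf of $\perturbed$ is a $C^1$ diffeomorphism onto an $A$-unstable leaf. The lemma's hypotheses only say that $p$ carries $\perturbed$-unstable leaves into $A$-unstable leaves; no regularity of $p$ along those leaves is assumed, and in general $p$ is merely H\"older. Already for $X=\bbT^2\times S^1$ and $\perturbed=\perturbed_1\times R_\theta$ with $\perturbed_1$ a generic Anosov perturbation of $A$, one has $p=p_1\circ\pi_{\bbT^2}$ where $p_1$ is the structural-stability conjugacy, and $p_1$ restricted to unstable leaves is typically singular rather than absolutely continuous. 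So the passage from ``Lebesgue-a.e.\ on a leaf of $W^u_\perturbed$'' to ``Lebesgue-a.e.\ on the corresponding leaf of $W^u_A$'' is not available, and the comparison in your last sentence cannot be made. (At bottom you are implicitly assuming something like $\mu_B\ll p_*\mu_X$, i.e.\ absolute continuity of the central foliation, which is exactly what fails in ``Fubini's nightmare'' examples.)

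The paper avoids this regularity issue entirely by arguing by contradiction with a different choice of $V$. Assuming $p(\AM)$ misses some open $U\subset B$, one fixes $U'\Subset U$ and takes $V$ to be the set of points whose forward $\perturbed$-orbit never enters $p^{-1}(U')$. Then $\bigcup_{n\ge 0}\perturbed^{-n}(V)$ contains every point whose $\omega$-limit set avoids $p^{-1}(U)$, hence has full measure, so $\mu V>0$. The preceding lemma now gives an unstable arc in $V$; its $p$-image is an $A$-unstable arc whose forward $A$-iterates all avoid $U'$, contradicting density of unstable leaves. With this $V$ the case $\mu V=0$ simply does not arise, and no regularity of $p$ beyond continuity is needed.
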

\begin{proof}
	Suppose that there exists $b\in B$ such that $\AM(\perturbed)$ is disjoint with $p^{-1}(b)$. Then $\AM(\perturbed)$ is disjoint with a small neighborhood $p^{-1}(U)$, $b\in U\subset B$ of this fiber.

	Choose an open set $U'\Subset U$, and consider the set $V$ of points $x\in X$ such that the positive semi-orbit of $x$ never visits $p^{-1}(U')$,
	\begin{equation*}
		V=\Set{x\in X|\forall n\geq 0\,{\mathcal G}^n(x)\notin p^{-1}(U')}=\bigcap _{n\geq 0}{\mathcal G}^{-n}(X\setminus p^{-1}(U')).
	\end{equation*}
	Clearly, $V$ is a closed set. The union $\tilde V$ of all preimages ${\mathcal G}^{-n}(V)$ is the set of points $x$ that visit $p^{-1}(U')$ at most finitely many times. Since $U'\Subset U$, this union includes the set of points $x\in X$ whose $\omega $-limit sets are disjoint with $p^{-1}(U)$.

	Since $\AM(\perturbed)\cap p^{-1}(U)=\varnothing$, the set $\tilde V$ has full Lebesgue measure, hence $\mu V>0$. Due to the previous lemma, $V$ includes an arc of a leaf of the strongly unstable foliation, thus $p(V)$ includes an arc $\gamma _u$ of a leaf of the unstable foliation of $A$. By definition of $V$, none of the curves $A^n(\gamma _u)$ intersect $U'$ which is impossible. This contradiction proves the lemma.
\end{proof}
Finally, let us prove that in the settings of Theorem \ref{thm:bony-example}, $\AM(\perturbed)=\Amax(\perturbed)$. Due to Hirsch–Pugh–Shub and Ilyashenko–Gorodetski Theorems, $\perturbed$ satisfies all assumptions of the previous lemma, hence $\AM(\perturbed)$ intersects each leaf $p^{-1}(b)$ by at least one point. Therefore, $\AM(\perturbed)$ includes the graph part of $\Amax(\perturbed)$. Since the graph part of $\Amax(\perturbed)$ is dense in $\Amax(\perturbed)$, $\AM(\perturbed)=\Amax(\perturbed)$.

Finally, we proved all assertions of Main Theorem.
\section{Acknowledgements}
The author is grateful to Yu. Ilyashenko for statement of the problem. I also grateful to É. Ghys and V. Kleptsyn for fruitful discussions, and to my wife N.~Goncharuk for her patients and for a great help with preparation of the text.
\bibliographystyle{plain}
\bibliography{these-en}
\end{document}